\newtheorem{theorem}{Theorem}[section]
\newtheorem{lemma}[theorem]{Lemma}
\newtheorem{corollary}[theorem]{Corollary}
\newtheorem{proposition}[theorem]{Proposition}
\theoremstyle{definition}
\newtheorem{conjecture}[theorem]{Conjecture}
\newtheorem{example}[theorem]{Example}
\def\R{\mathbb{R}}
\def\Q{\mathbb{Q}}
\def\beq {\begin{equation}}
\def\endq {\end{equation}}
\newcommand{\QQ}{\mathbb{Q}}
\newcommand{\ZZ}{\mathbb{Z}}
\def\beq {\begin{equation}}
\def\endeq {\end{equation}}
\renewcommand{\epsilon}{\varepsilon}
\begin{document}
\title{A Generalization of Siegel's Theorem and Hall's Conjecture}
\subjclass{11G05, 11A41} \keywords{Elliptic curve,
Hall's conjecture, prime, Siegel's Theorem}
\thanks{The research of the second author was
supported by a grant from EPSRC}
\author{Graham Everest, Val\' ery Mah\'e.}
\address{School of Mathematics, University of East Anglia,
Norwich NR4 7TJ, UK}
\email{g.everest@uea.ac.uk}
\email{v.mahe@uea.ac.uk}

\begin{abstract}
Consider an elliptic curve, defined over the
rational numbers, and embedded in projective space.
The rational points on the curve are viewed as integer vectors with
coprime coordinates.
What can be said about a rational point if a bound is placed
upon the number
of prime factors dividing a fixed coordinate? If the bound is zero, then
Siegel's Theorem guarantees that there are only finitely many such points.
We consider, theoretically and computationally, two conjectures: one is a generalization
of Siegel's Theorem and the other is a refinement which
resonates with Hall's conjecture.
\end{abstract}

\maketitle


\section{Introduction}\label{intro}

Let $C$ denote an elliptic curve defined over the rational
field $\mathbb Q$, embedded in projective space~$\mathbb P^N$
for some~$N$. For background on elliptic curves
consult \cite{cassels,aec,Silverman-advance}. The rational points of
$C$ can be viewed as vectors
\beq \label{vector}
[x_0,\dots, x_N], \quad x_0,\dots ,x_N \in \mathbb Z,
\endeq
with coprime integer
coordinates. Fixing $0 \le n \le N$,
Siegel's Theorem guarantees that only finitely many rational points
$Q\in C(\mathbb Q)$
have $x_n=1$. Since the number 1 is divisible by no primes, consider
how the set of rational points
$Q$ might be constrained if the number of distinct primes
dividing $x_n$ is
restricted to lie below a given fixed bound.

\medskip
\begin{conjecture}\label{gen}Let $C$ denote an elliptic curve defined
over the rational
field $\mathbb Q$, embedded in projective space. For any fixed choice
of coordinate $x_n$, as in (\ref{vector}),
given a fixed bound $L$, the set $S_n(L)$ of points $Q\in C(\Q)$ for which
$x_n$ is divisible by
fewer than $L$ primes is repelled by $C(\overline {\mathbb Q})$.
In other words, on any affine piece of~$C$ containing a point $D\in C(\overline {\mathbb Q})$,
there is a punctured
neighbourhood $N(D)$ of~$D$ (with respect to the archimedean topology), such that
$$N(D)\cap S_n(L)=\emptyset .
$$
\end{conjecture}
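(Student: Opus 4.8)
\medskip

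The plan is to recast the statement as one about rational points that approximate a single algebraic point simultaneously at several places, and thereby reduce it to the theory of integral points on curves. Fix $D\in C(\overline{\mathbb Q})$, viewed inside $C(\mathbb C)$ through a chosen archimedean embedding, and suppose for contradiction that there is a sequence of pairwise distinct rational points $Q_i=[x_0^{(i)}:\cdots:x_N^{(i)}]\in S_n(L)$, written with coprime integer coordinates, with $Q_i\to D$ in the archimedean topology. Writing $\omega(m)$ for the number of distinct prime factors of $m$, the hypothesis $\omega(x_n^{(i)})<L$ says that $x_n^{(i)}$ is a unit away from a set $T_i$ of fewer than $L$ primes; adjoining the infinite place together with the fixed primes of bad reduction of $C$ and of the divisor $\mathfrak d=C\cap\{X_n=0\}$, we obtain a set $S_i$ of places, whose cardinality is bounded independently of $i$, at which every ratio $x_m^{(i)}/x_n^{(i)}$ is an $S_i$-integer. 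Thus each $Q_i$ is an $S_i$-integral point of the affine curve $C\setminus\mathfrak d$, and in terms of Weil local heights this reads $\sum_{v\in S_i}\lambda_{\mathfrak d,v}(Q_i)=h(Q_i)+O(1)$: the full height is concentrated on the boundedly many places of $S_i$.

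I would then use the convergence $Q_i\to D$ to force almost all of that mass onto a single finite place. The archimedean term is $\lambda_{\mathfrak d,\infty}(Q_i)=\log\bigl(\max_j|x_j^{(i)}|\,/\,|x_n^{(i)}|\bigr)$. If $x_n(D)\neq0$ it stays $O(1)$; if $D$ lies on $\mathfrak d$ it is comparable to $-\log\|Q_i-D\|$, and one bounds it from above by a lower bound for linear forms in elliptic logarithms of the type of Masser, David and Hirata--Kohno: since $D$ is algebraic, its elliptic logarithm is a linear combination with algebraic coefficients of periods and of elliptic logarithms of algebraic points, whence $\|Q_i-D\|\gg h(Q_i)^{-\kappa}$ for an effective $\kappa$ and $\lambda_{\mathfrak d,\infty}(Q_i)=O(\log h(Q_i))$. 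Either way the non-archimedean mass is at least $h(Q_i)-O(\log h(Q_i))$ and is supported on boundedly many primes, so for each $i$ there is a prime $p\in S_i$ at which $\lambda_{\mathfrak d,p}(Q_i)$ exceeds a fixed positive multiple of $h(Q_i)$; equivalently, $Q_i$ lies $p$-adically within $\exp(-c\,h(Q_i))$ of a fixed algebraic point of $\mathrm{supp}\,\mathfrak d$. When $p$ is constrained to lie in a fixed finite set this is impossible, by Roth's theorem applied on a suitable covering of $C$ --- which is precisely the proof of Siegel's theorem for $C\setminus\mathfrak d$. In particular $L=1$, where $x_n=\pm1$, already yields Conjecture~\ref{gen}, and more generally the argument settles it whenever the primes that can divide $x_n$ on $S_n(L)$ are confined in advance to a fixed finite set.

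The hard part --- and the reason the statement is offered as a conjecture --- is exactly that the sets $S_i$ vary with $i$: their cardinality is bounded, but the primes occurring in them are not. Put another way, $S_n(L)=\bigcup_{|T|<L}\{\,Q\in C(\mathbb Q):x_n(Q)\text{ is supported on }T\,\}$ is a union of finite sets, each finite by Siegel's theorem applied with $S=T\cup\{\infty\}$, and the conjecture is the assertion that these finite sets do not accumulate at any algebraic point --- a consequence of Siegel's theorem only if that theorem is made uniform, indeed effective, in $T$. Roth's theorem, the Schmidt subspace theorem, and the methods of Corvaja--Zannier and Levin all treat the set of places as fixed and supply no such uniformity, so some genuinely new ingredient seems to be needed. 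Failing that, I would aim only at partial results: the case $L=1$; elliptic curves of Mordell--Weil rank one, where the points of $S_n(L)$ are the multiples $mP$ and the condition $\omega(x_n(mP))<L$ imposes a severe restriction on $m$ through the orders of $\widetilde P$ in the groups $C(\mathbb F_p)$; and curves with complex multiplication, for which the available linear-forms-in-logarithms estimates are sharper. Obtaining any control uniform over the auxiliary prime sets $T$, and hence proving the conjecture in full, is what I would leave open --- as the authors do.
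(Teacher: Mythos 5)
This statement is Conjecture~\ref{gen}: the paper offers no proof of it, only motivating examples and the special cases established in Theorem~\ref{cnt}, so there is no ``paper proof'' to measure you against. Your proposal is honest on this point --- you do not claim a proof, and your analysis of why none is available is essentially correct. The reduction you sketch (points of $S_n(L)$ are $S_i$-integral on $C\setminus\mathfrak d$ for sets $S_i$ of bounded cardinality but varying support; the height decomposition $h=\sum_{v\in S_i}\lambda_{\mathfrak d,v}+O(1)$ plus pigeonhole forces a single place to carry a positive proportion of the height; Roth disposes of the case where the places are confined to a fixed finite set) is the right way to see that the conjecture is exactly a version of Siegel's theorem made uniform in the auxiliary prime set $T$, and your observation that none of the standard tools (Roth, subspace theorem, Corvaja--Zannier, Levin) supplies that uniformity correctly locates the difficulty. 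Your remark that the case $x_n=\pm1$ reduces to classical Siegel matches the paper's first Example.

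Where you and the authors diverge is in the partial results. You propose attacking rank one via the divisibility structure of $x_n(mP)$ and using linear forms in elliptic logarithms; the paper instead proves Theorem~\ref{cnt} for the curves $y^2=x^3-Nx$ by entirely elementary means: odd multiples of $Q_1$ land on the bounded real component (where $|x|\le N$, cf.\ Proposition~\ref{weakerthan-canonical-height}), even multiples and points in the image of the $2$-isogeny are handled by an explicit divisibility argument ($C\mid A^2+NB^4$ together with $C^2=A(A^2-NB^4)$ forces $|C|\le 2N$) plus the ABC conjecture. That route is conditional and special to these curves, but it delivers the explicit bound $h_O(Q)\ll\log N$ demanded by the stronger Conjecture~\ref{siegelhall}, which your Diophantine-approximation framework would not give even if the uniformity problem were solved. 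One small caution: your appeal to David/Hirata--Kohno lower bounds to control $\lambda_{\mathfrak d,\infty}(Q_i)$ when $D\in\mathrm{supp}\,\mathfrak d$ quietly assumes $D$ is defined over a number field of controlled degree and that the implied constants are uniform in the curve; that is fine for a fixed $C$ and $D$, which is all the conjecture requires, but you should say so.
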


\begin{example} To show that Conjecture~\ref{gen} implies Siegel's Theorem,
consider the rational points with $x_n=1$. The hyperplane $x_n=0$ intersects
$C(\overline {\mathbb Q})$ non-trivially.
Let $D$ denote any point in the intersection. Fixing $L=0$, the
conjecture implies in particular that each $|x_i/x_n|$, with $i\neq n$, is bounded above. Since
$x_n=1$ this bounds each $|x_i|$ with $i\neq n$. Thus there can only be finitely
many such points.
\end{example}

\begin{example}
Consider a homogeneous cubic
$$AX^3+BY^3+CZ^3=0
$$
with all the terms non-zero integers.  Consider the coprime integer triples
$[X,Y,Z]$ satisfying the equation with one of them, say $Z$, constrained to be a prime power.
Choosing $D$ to be any algebraic point with $Z-$coordinate zero, the application of
Conjecture~\ref{gen} to $D$ predicts that $|X/Z|$ and $|Y/Z|$ are bounded.
Notice that the conjecture does not predict that only finitely
many such points exist.

If $A/B$ is a rational cube then only finitely
triples can have $Z$ equal to a prime power.
This is essentially the point in \cite[Theorem 4.1]{primeds}. The condition
about $A/B$ enables a factorization to take place. Now the claim follows
because
essentially all of $Z$ must occur in one of the factors. But the logarithms of
the variables are commensurate by a strong form of Siegel's Theorem \cite[Page 250]{aec}
and this
yields a contradiction.

\end{example}

\begin{example} When the group of rational points has rank 1, we expect a
natural generalization of the Primality Conjecture \cite{eds,primeds,pe} for
elliptic divisibility sequences
to hold. This conjecture was stated in the rank 1 situation for
Weierstrass curves and it predicts that only finitely many multiples of a fixed
non-torsion point have a prime power denominator in the $x$-coordinate.
Using the same heuristic argument as in \cite{eds,primeds} we
expect that $S_n(1)$ is finite. More generally, it
seems likely that
in rank 1, $S_n(L)$ is finite for any fixed~$L$ and~$n$.
\end{example}

On a plane curve, Siegel's Theorem can be interpreted
to say that the point at infinity
repels integral points.
We can
see no reason why infinity should play a special role and the computations in
section \ref{data} support this view. That is why Conjecture~\ref{gen} is
stated in such a general way. For practical purposes,
measuring the distance to infinity is natural and many of our
computations concern this distance.
Conjecture~\ref{gen} arose using
the Weierstrass model so we now focus on that equation, making a
conjecture about
an explicit bound on the radius of the
punctured neighbourhood, one which resonates with Hall's conjecture.

\subsection{Weierstrass Equations}
Let $E$ denote an elliptic curve over $\mathbb Q$ given by
a Weierstrass equation in minimal form
\beq \label{weq}y^2+a_1xy+a_3y=x^3+a_2x^2+a_4x+a_6
\endeq
with $a_1,\dots ,a_6\in \ZZ$.
Given a non-identity rational point
$Q\in E(\Q)$, the shape of equation (\ref{weq}) forces $Q$ to be in the form
\beq \label{shape}Q=\left(\frac{A_Q}{B_Q^2},\frac{C_Q}{B_Q^3}\right),
\endeq
where $A_Q,B_Q,C_Q\in \ZZ$ and $\gcd(B_Q,A_QC_Q)=1$.
Define the {\it length} of~$Q$, written $L(Q)$,
to be the number of distinct primes $p$ such
that
\beq |x(Q)|_p>1,
\endeq
where $|.|_p$ denotes the usual $p$-adic absolute value.
In other words, the length of $Q$ is the number of distinct
prime divisors of $B_Q$. From the definition, the length zero rational points are precisely the
integral points on~$E$.
Conjecture~\ref{gen} implies that bounding $L(Q)$
bounds $|x(Q)|$ independently
of~$Q$.

The case when $L(Q)=1$ is much more interesting. The definition
of a length 1 point~$Q$ means that the
denominator of~$x(Q)$ is the square of a prime power. It has been
argued \cite{eds,primeds,pe} heuristically
that when
the rank of $E(\QQ)$ is 1 then, again, only finitely many points~$Q$ exist.
This
is known as the {\it Primality Conjecture} for elliptic divisibility sequences.
Much data has been gathered in support of the Primality Conjecture and it
has been proved in many cases. In higher rank, a heuristic argument, together
with computational evidence~\cite{everogwar}, suggests that, in some cases,
infinitely many rational points~$Q$ can have length 1. In section~\ref{data}
many examples appear.

What follows is an explicit form of Conjecture \ref{gen}. To motivate
this, consider a Mordell curve
$$E: y^2=x^3+d, \mbox{ $d\in \mathbb Z$}.
$$
Hall's conjecture \cite{birch,hall} predicts an asymptotic bound of $(2+\epsilon)\log |d|$ (which is essentially
$(1+\epsilon)\log |\Delta_E|$) for $\log |x|$ when $x\in \mathbb Z$.
Conjecture \ref{siegelhall} is a simultaneous generalization of a strong
form of Siegel's Theorem and of Hall's conjecture.
Given any rational
point $D$ on $E$, let $h_D$ denote the Weil height from~$D$. In other words,
$$h_D(Q)=\max \{0, \log |x(Q)|\},
$$
if $D=O$ is the point at infinity, and
$$h_D(Q)=\max \{0, -\log |x(Q)-x(D)|\}
$$
if $D$ is a finite point.

\begin{conjecture}\label{siegelhall}Assume $E$ is in standardized
minimal form. Let $D$ denote any rational point on~$E$.
If $L(Q)\le L$ then
\beq \label{conbound}
h_D(Q) < C(L,D)\log |\Delta_E|
\endeq
where $C(L,D)$ depends only upon $L$ and $D$, and $\Delta_E$ denotes the
discriminant of~$E$.
\end{conjecture}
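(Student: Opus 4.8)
The plan is to prove Conjecture~\ref{siegelhall} in two stages: first to reduce an arbitrary rational point $D$ to the point at infinity, and then to establish a Hall-type bound for the rational points whose denominators involve at most $L$ primes. The second stage is the substantial one, and I expect it to be attainable only conditionally --- under the $abc$ conjecture, equivalently under Vojta's conjecture for $\mathbb{P}^{1}$ minus three points --- since already the case $L=0$, $D=O$ is the Hall--Lang conjecture on integral points.

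For the reduction to $D=O$, write $x(D)=r/s^{2}$ as in~(\ref{shape}); for $Q\ne\pm D$ one has the exact identity
\beq
h_D(Q)=\max\bigl\{0,\ 2\log|sB_Q|-\log|s^{2}A_Q-rB_Q^{2}|\bigr\},
\endeq
so $h_D(Q)$ is large exactly when the nonzero integer $s^{2}A_Q-rB_Q^{2}$ is anomalously small relative to $s^{2}B_Q^{2}$. Translating the group law so that $D$ becomes the origin produces an elliptic curve $\Q$-isomorphic to $E$, hence of the same minimal discriminant; on it, $h_D$ is bounded above by a $D$-dependent multiple of the height from infinity plus a $D$-dependent constant, while the length of $Q$ changes by at most a $D$-dependent amount. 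Since $C(L,D)$ is allowed to depend on $D$, this reduces matters to the case $D=O$.

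In the case $D=O$ we have $h_D(Q)=\max\{0,\log|A_Q|-2\log B_Q\}$, and we may assume $|x(Q)|$ exceeds a fixed large power of $|\Delta_E|$, for otherwise there is nothing to prove. Completing the square (which alters $\log|\Delta_E|$ by at most an absolute constant) and homogenizing yields
\beq
C_Q^{2}=A_Q^{3}+a_2A_Q^{2}B_Q^{2}+a_4A_QB_Q^{4}+a_6B_Q^{6},\qquad\gcd(B_Q,A_QC_Q)=1,
\endeq
where, granting $abc$ --- and hence Szpiro's conjecture --- one has $\max_i|a_i|\ll|\Delta_E|^{O(1)}$. Applying the $abc$ conjecture to this relation (over $\Q$ if $E$ has a rational $2$-torsion point, and otherwise over the cubic field generated by a root of the $2$-division polynomial, after the usual descent), and using that the exponents of the primes dividing $B_Q$ are killed by the radical, one obtains
\beq
\log^{+}|x(Q)|\ll_{\epsilon}\log|\Delta_E|+\epsilon\twolinesum{p\mid B_Q}{v_p(B_Q)=1}\log p.
\endeq
Thus if every prime of $B_Q$ occurs to exponent at least $2$ we are done, and the genuine obstruction is the primes dividing $B_Q$ to the first power: the hypothesis $L(Q)\le L$ controls their number but not their size. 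To deal with them I would feed in a lower bound for the canonical height, $\h(Q)\gg\log|\Delta_E|$ (Lang's conjecture on canonical heights, a consequence of $abc$ and known unconditionally for wide classes of curves): if $|x(Q)|$ is as large as assumed and $B_Q$ is essentially squarefree, then $\h(Q)$ is concentrated in the archimedean local height $\tfrac12\log|x(Q)|$ together with $\log B_Q$, and matching this against the duplication formula should force every prime dividing $B_Q$ to the first power to satisfy $p\ll|\Delta_E|^{O_L(1)}$, which would close the estimate.

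The main obstacle is precisely this last step. Unconditionally there is no hope, since $L=0$, $D=O$ is the Hall--Lang conjecture; and even granting $abc$, what makes $L\ge1$ genuinely harder than $L=0$ is that the length hypothesis controls the number but not the size of the primes in the denominator, so the $abc$ radical alone does not deliver a bound polynomial in $|\Delta_E|$. The canonical-height lower bound is what has to supply the missing control, and pinning that interaction down --- in particular checking that the resulting constant, and above all the exponent of $|\Delta_E|$, depends only on $L$ and $D$ uniformly over the curve --- is where the real effort lies, rather than in any single new idea.
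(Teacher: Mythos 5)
First, be clear about what you are attempting: in the paper this statement is a \emph{conjecture}. The authors do not prove it; they establish only the special cases in Theorem~\ref{cnt} and Corollary~\ref{wow} --- for the family $y^2=x^3-Nx$, for particular subgroups, and conditionally on ABC --- and otherwise offer heuristics and numerical data. Your text is likewise a strategy rather than a proof, as you acknowledge, but it is worth pinpointing where it breaks. The first gap is the reduction from general $D$ to $D=O$. The hypothesis $L(Q)\le L$ counts primes dividing the denominator $B_Q$ of $x(Q)$ \emph{on the fixed standardized minimal model}; after a change of coordinates moving $D$ to infinity, ``$Q$ close to $D$'' does become ``new $x$-coordinate large'', but the arithmetic hypothesis turns into a condition on (essentially) the numerator of $x(Q)-x(D)$, not on the denominator of the new $x$-coordinate. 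These are not equivalent up to an $O_D(1)$ change in length. This is precisely why the paper's Corollary~\ref{wow} must restate the hypothesis as ``prime power \emph{numerator}'' after translating by $D=[0,0]$ (which sends $x$ to $-N/x$): the finite-$D$ case is a genuinely different assertion, not a formal corollary of the case $D=O$.

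The second and decisive gap is the one you flag yourself. When $B_Q$ is squarefree with $L$ large prime factors, $\operatorname{rad}(B_Q^6)=\operatorname{rad}(B_Q)$ is as large as $B_Q$, so the ABC radical yields no saving in exactly the regime that the hypothesis $L(Q)\le L$ is meant to address; your displayed inequality with the $\epsilon\sum\log p$ term is then vacuous, and the proposed rescue via Lang's lower bound for $\h(Q)$ and the duplication formula is not carried out (nor is it clear it can be: $\h(Q)\gg\log|\Delta_E|$ bounds the canonical height from \emph{below}, whereas you need an upper bound on the archimedean local height in terms of the non-archimedean ones). Even the $L=1$, rank-one case --- the Primality Conjecture --- remains open. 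The paper's conditional results evade this difficulty entirely by exploiting special structure of $y^2=x^3-Nx$: each point of the relevant subgroup either lies on the bounded real component, where $|x|\le N$ holds trivially, or is a double (resp.\ lies in the image of a $2$-isogeny), in which case the duplication formula together with $L\le 1$ forces the divisibility $C\mid A^2+NB^4$, whence $|C|\le 2N$ and all quantities are bounded by explicit polynomials in $N$ without any need to control the primes dividing $B$. So your outline correctly identifies the central obstruction but does not overcome it, and as written it establishes neither the conjecture nor any special case beyond what ABC already gives for integral points.
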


\textbf{Notes}

(i) The term {\it standardized } means that $a_1,a_3\in \{0,1\}$ and
$a_2 \in \{-1,0,1\}$.
Every elliptic curve has a unique standardized minimal form. This assumption is
necessary in Conjecture~\ref{siegelhall}. When $D=O$, the left hand side is not
invariant under a translation of the $x$-coordinate, unlike the right hand side.

(ii) When $D$ is algebraic but not rational, a similar conjecture can be made. Now
though, the constant $C(L,D)$ will also depend upon the degree of the field generated
by~$D$.

\medskip

Although strong bounds are known for the number of $S$-integral
points on an elliptic curve \cite{grosssilverman,hindrysilverman,silvermansiegel},
the best
unconditional bound on the height of an $S$-integral
point is quite weak \cite{Bilu,Bugeaud,hh} in comparison with what is
expected to be true. Using the ABC conjecture an explicit bound
upon the height of an $S$-in\-tegral point can be given
\cite{elkiesIMRN,surroca}. For integral points, the best bound
for the logarithm of the $x$-coordinate of an integral point
on a standardized minimal curve is
expected to be a multiple of the
log-discriminant (or the Faltings height). 

What follows are some special cases of Conjecture~\ref{siegelhall}.

\begin{theorem}\label{cnt} Let $N>0$ denote an integer and
consider the curve
$$E_N:\quad y^2=x^3-Nx.
$$
Suppose the non-torsion point $Q_1\in E_N(\Q)$ has $x(Q_1)<0$. Let $O$ denote the point at infinity.
Assume the ABC Conjecture holds in~$\mathbb Z$.
\begin{itemize}
\item If $L(nQ_1)\le 1$ then the following uniform bound holds
$$h_O(nQ_1) << \log N.
$$
\item With $Q_1$ as before, assume $Q_1$ and $Q_2$ are independent and
either $Q_2$ is twice another rational point or $x(Q_2)$ is a square. Writing $G=<Q_1,Q_2>$,
for
any point $Q\in G$, $L(Q)\le 1$ implies the following uniform bound
$$h_O(Q) << \log N.
$$
\end{itemize}
\end{theorem}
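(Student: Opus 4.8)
The plan is to exploit the special arithmetic of the curves $E_N: y^2 = x^3 - Nx$, where the $2$-torsion is rational (the points $(0,0)$, $(\pm\sqrt N, 0)$ when $N$ is a square, and in any case $(0,0) \in E_N(\Q)$), and where multiplication-by-$2$ and the descent map admit explicit formulas. The key mechanism behind all three bullet points should be the same: a point $Q$ of length $L(Q) \le 1$ has $x(Q) = A/B^2$ with $B$ a prime power, and the factorization $y(Q)^2 = x(Q)(x(Q)^2 - N)$ translates, after clearing denominators, into an equation $C^2 = A(A^2 - NB^4)$ in coprime integers. Since $\gcd(A, A^2 - NB^4) \mid N$, the two factors $A$ and $A^2 - NB^4$ are, up to a bounded divisor of $N$, each close to a perfect square. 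This is exactly the shape to which the ABC conjecture applies: one writes $A = d u^2$, $A^2 - NB^4 = d v^2$ (with $d \mid N$) so that $A^2 - d^2 u^2 v^2 = NB^4$ is a "three-term" relation with a large smooth/prime-power piece $B^4$, and ABC forces $\max(|A|, B^4) \ll_\epsilon (\operatorname{rad}(\text{stuff}))^{1+\epsilon}$, where the radical is controlled by $N$ together with the single prime dividing $B$. The point is that the prime dividing $B$ contributes only its first power to the radical while appearing as $B^4$ on the other side, so it cannot be large; this is what collapses $h_O(Q) = \log|x(Q)| = \log|A/B^2|$ down to $O(\log N)$.

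First I would set up the descent explicitly on $E_N$: recording the image of $Q$ under the standard $2$-descent homomorphism $E_N(\Q)/2E_N(\Q) \to \Q^\times/(\Q^\times)^2 \times \Q^\times/(\Q^\times)^2$ given (away from bad primes) by $Q \mapsto (x(Q), x(Q) - \sqrt N)$ or its rational avatar $Q \mapsto (x(Q), x(Q)^2 - N)$ when $\sqrt N \notin \Q$. The hypothesis $x(Q_1) < 0$ in the first bullet is used here: it pins down which square-class $d \mid N$ can occur for $x(Q_1)$ (and hence, via the homomorphism property, for the $x$-coordinates of the multiples $nQ_1$ lying in a fixed coset), so that the integer $d$ appearing in $A = d u^2$ is bounded in terms of $N$ uniformly in $n$. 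For the second bullet the extra hypothesis — that $Q_2$ is twice a rational point, or $x(Q_2)$ is a square — is precisely what is needed so that the descent image of $Q_2$ (and therefore of every $Q \in \langle Q_1, Q_2\rangle$ with $L(Q) \le 1$) again lands in a controlled square class, so the same ABC input goes through with constants depending only on $N$.

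Concretely the steps are: (1) write $Q$ in the form $(A/B^2, C/B^3)$ with $L(Q) \le 1$, so $B = p^k$; (2) derive $C^2 = A(A^2 - NB^4)$ and bound $\gcd(A, A^2 - NB^4) \mid N$; (3) use the descent/sign hypothesis to write $A = d u^2$, $A^2 - NB^4 = (d') v^2$ with $d, d' \mid N$ in coprime-up-to-$N$ integers $u, v$; (4) feed the resulting relation $d^2 u^4 - d d' u^2 v^2 \cdot(\text{sign}) = \pm N B^4$ — more precisely the ABC triple coming from $A^2 - NB^4 = d' v^2$, namely $A^2 = NB^4 + d'v^2$ — into the ABC conjecture, noting $\operatorname{rad}(A^2 \cdot NB^4 \cdot d'v^2) \le |A|\cdot \operatorname{rad}(N)\cdot p \cdot |v| \ll_N |A|^{3/2} p$; (5) conclude $|A|^2 \ll_{N,\epsilon} (|A|^{3/2} p)^{1+\epsilon}$ and, separately, that $p^{4k} = B^4 \le |A^2|$ forces $p^k \ll_N |A|^{1/2}$, and combine to get $|A| \ll_{N,\epsilon} 1$ up to powers absorbed into the $\ll \log N$ claim — so $h_O(Q) = \log|A| - \log B^2 \ll \log N$.

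The main obstacle will be step (3)–(4): getting the ABC bookkeeping tight enough that the power of the prime $p$ genuinely cancels rather than reappearing, and making the descent argument uniform in $n$ (first bullet) or uniform over $G$ (second bullet) with a constant depending only on $N$ and not on the individual point. In particular one must be careful that "independent" in the second bullet, together with the square/doubling hypothesis on $Q_2$, really does confine $\langle Q_1, Q_2 \rangle \cap S_n(1)$ to finitely many descent classes; handling the primes of bad reduction (dividing $2N$) and the torsion contribution so that everything stays inside $O(\log N)$ is where the delicate constant-tracking lies, and where the hypotheses of the theorem are doing their real work.
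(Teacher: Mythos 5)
Your central ABC step does not close in the generic case, and this is a genuine gap rather than a bookkeeping issue. Follow your own estimates: from $C^2=A(A^2-NB^4)$ you write $A=du^2$, $A^2-NB^4=d'v^2$ with $d,d'\mid N$, and feed $NB^4+d'v^2=A^2$ into ABC. The radical is $\ll_N p\,|v|\,|u|\ll_N p\,|A|^{3/2}$, so ABC gives $|A|^{1/2-O(\epsilon)}\ll_N p^{1+\epsilon}$. But the only upper bound you have on $p$ is $p\le B\ll |A|^{1/2}$ (and even that requires $A>0$; if $A<0$ the point already satisfies $|x(Q)|\le\sqrt{N}$ for free). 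In the main case $B=p$ a genuine prime (rather than a higher prime power), substituting $p\ll|A|^{1/2}$ yields $|A|^{1/2-O(\epsilon)}\ll_N|A|^{1/2+O(\epsilon)}$, which is vacuous: the exponents cancel exactly and no bound on $|A|$ results. You flagged this cancellation as ``the main obstacle,'' and indeed it is fatal to the argument as proposed: the hypothesis $L(Q)\le 1$ alone, applied to a single point via this factorization, is not enough leverage for ABC.

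The paper's proof extracts the missing leverage from two sources your proposal does not use. First, the hypothesis $x(Q_1)<0$ is a real-topological condition, not a square-class condition: it places $Q_1$ on the bounded real component of $E_N$, where $|x|\le N$ holds trivially, and all \emph{odd} multiples $nQ_1$ (more generally $n_1Q_1+n_2Q_2$ with $n_1$ odd, when $Q_2$ lies on the unbounded component) stay there. Second, the remaining points of $G$ are doubles $2P$ (or images under the $2$-isogeny from $y^2=x^3+4Nx$), and for these the explicit duplication formula $x(2P)=\bigl((A^2+NB^4)/(2CB)\bigr)^2$ is the key: if $L(2P)\le 1$ then the factor $C$ in the denominator must cancel into $A^2+NB^4$, and combining $C\mid A^2+NB^4$ with $C^2=A(A^2-NB^4)$ forces every prime power in $C$ to divide $2N$, whence $|C|\le 2N$, $|A|\le 4N^2$, and $B$ is bounded similarly --- all elementary. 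ABC enters only in the degenerate integral case $B=1$, where it plays its Hall-conjecture role of bounding $\log|A|\ll\log N$. So the extra structure you need is not a finer descent class for $Q$ itself but the fact that $Q$ is a double (or isogeny image) of another rational point, which is exactly what the hypotheses on $Q_1$ and $Q_2$ guarantee for every element of $G$ not already on the bounded component.
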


The discriminant of $E_N$ is essentially a power
of $N$ so $\log N$ is commensurate with the log-discriminant,
as required by Conjecture~\ref{siegelhall}.

As we said before, only finitely many terms $nQ_1$ are expected to
have length 1. Nonetheless, Theorem~\ref{cnt} gives non-trivial
information about where they are located. Computations, as well as a standard heuristic argument,
suggest there could be infinitely many length 1
points in the group $G=<Q_1,Q_2>$ in the second part of Theorem~\ref{cnt}.

\begin{example}
$$E_{90}:\quad y^2 = x^3 - 90x \quad Q_1=[-9,9], Q_2=[49/4,-217/8]$$
This
example occurs as one of a number of similar examples of rank 2
curves appearing
in the final table in section~\ref{data}.
Note that $Q_2$ is twice the point $[-6,18]$.
\end{example}

\begin{example}$E_{1681}: \quad y^2 = x^3 - 1681x
\quad Q_1=[-9,120], Q_2=[841,24360]$ Note that $x(Q_2)=29^2$. Also,
$Q_1$ and
$Q_2$ are generators for the torsion-free part of~$E_{1681}(\Q)$.
\end{example}

An immediate consequence of Theorem \ref{cnt} is
a version of Conjecture~\ref{siegelhall} when $D$ is
the point~$[0,0]$.

\begin{corollary}\label{wow} Assume the ABC conjecture for $\mathbb Z$.
Let $D$ denote the point $[0,0]$. With~$G$ as in Theorem~\ref{cnt},
let $G'=D+G$. Suppose $Q$ is a point in $G'$,
with a prime power numerator
then
$$h_D(Q)<< \log N
$$
uniformly.
\end{corollary}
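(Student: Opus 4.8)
The plan is to deduce Corollary~\ref{wow} from Theorem~\ref{cnt} by translating by the rational two-torsion point $T=[0,0]$. On $E_N$ this translation acts transparently on the $x$-coordinate: for a point $P=(x_1,y_1)$ with $P\neq O,T$, the line through $T$ and $P$ has slope $y_1/x_1$, and substituting $y=(y_1/x_1)x$ into $y^2=x^3-Nx$ gives
\[
x\Bigl(x^2-\frac{y_1^2}{x_1^2}\,x-N\Bigr)=0 .
\]
The root $x=0$ is the point $T$, and the remaining two roots are $x(P)$ and $x(P+T)$, whose product is $-N$. Hence $x(P)\,x(P+T)=-N$ for all such $P$, so translation by $T$ exchanges the role of $O$, where $|x(P)|$ is large, with that of $D=[0,0]$, where $|x(P)|$ is small. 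This symmetry is exactly what converts the estimate of Theorem~\ref{cnt} into the estimate of the corollary.

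Next I would fix $Q\in G'=T+G$ and write $Q=T+Q'$ with $Q'\in G$. If $Q=T$ then $x(Q)=0$, which has no prime-power numerator, so this point is excluded; since $G\cong\ZZ^2$ is torsion-free we may assume $Q'$ is a non-torsion point of $G$. Writing $x(Q)=A_Q/B_Q^2$ in lowest terms, the identity above gives $x(Q')=-NB_Q^2/A_Q$. I would then check that the hypothesis that $A_Q$ is, up to sign, a prime power $p^k$ forces $L(Q')\le1$: since $\gcd(A_Q,B_Q)=1$ we have $p\nmid B_Q$, so cancelling $\gcd(NB_Q^2,p^k)=p^{\min\{\ord_p(N),k\}}$ leaves $x(Q')$ with denominator $p^{\max\{0,\,k-\ord_p(N)\}}$, a power of the single prime $p$ (equal to $1$ when $|A_Q|=1$). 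Hence $B_{Q'}$ is divisible by at most one prime, that is, $L(Q')\le1$, and Theorem~\ref{cnt} applies to give $h_O(Q')<<\log N$ uniformly.

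Finally I would transfer the bound back to $Q$. Since $x(D)=0$, by definition $h_D(Q)=\max\{0,-\log|x(Q)|\}$, and $x(Q)\,x(Q')=-N$ gives $-\log|x(Q)|=\log|x(Q')|-\log N\le\log|x(Q')|$ because $N\ge1$. Therefore
\[
h_D(Q)\le\max\{0,\log|x(Q')|\}=h_O(Q')<<\log N ,
\]
as claimed. The one step demanding genuine care is the middle one: verifying that the hypothesis that the numerator of $x(Q)$ is a prime power translates precisely into the hypothesis $L(Q')\le1$ required by Theorem~\ref{cnt}. Once that bookkeeping is in place, the explicit translation formula makes everything else immediate, so the whole deduction is short.
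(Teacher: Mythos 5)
Your proposal is correct and follows exactly the route the paper takes: translate by the two-torsion point $D=[0,0]$, observe that this sends $x(Q)$ to $-N/x(Q)$ so that prime-power numerators become prime-power (square) denominators, verify $L(Q')\le 1$, and invoke Theorem~\ref{cnt} together with $h_D(Q)\le h_O(Q')$. The paper's own proof is only a two-line sketch of this translation argument, so your version simply supplies the bookkeeping it leaves implicit.
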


Although there are lots of curves with many length 1
points, no proof
exists of the infinitude of length 1 points for
even one curve. We see no way of gathering data about
length 2 points, because checking seems to require the
ability to factorize very large integers.
All the data
gathered in this paper used Cremona's tables \cite{cremona},
together with the computing packages \cite{magma,parigp}.

Theorems \ref{cnt} is proved over the
next section. Section~\ref{data} gives
data in support of Conjecture~\ref{siegelhall}. The introduction
concludes with a brief subsection about
the situation when the base field is a function field.

\subsection{The Function Field $\mathbb Q(t)$}

The situation when the base field is $\mathbb Q(t)$ lies at a
somewhat obtuse angle to the rational case. On a Weierstrass model,
Conjecture~\ref{gen} predicts that, over the rational field,
length 1 points will have
bounded $x$-coordinate. In the language of local heights
\cite{hindrysilvermanbook}, this
is equivalent to the archimedean local height being bounded.
Over the field~$\mathbb Q(t)$, Manin \cite{manin} showed that all
the local
heights, including the one at infinity, are bounded unconditionally. On the
other hand, work of Hindry and
Silverman~\cite[Proposition 8.2]{hindrysilverman}
shows that the bound for integral points agrees with the
one predicted by Conjecture~\ref{siegelhall}.

\section{Special Cases}

Before the proof of Theorem \ref{cnt}, one lemma is needed.

\begin{lemma}\label{cute}Let $P$ denote any non-torsion point in $E_N(\Q)$. Assuming the
ABC Conjecture for~$\mathbb Z$, if $L(2P)\le 1$ then
$$\log |x(P)| << \log |N| \mbox{ and } \log |x(2P)| << \log |N|.
$$
\end{lemma}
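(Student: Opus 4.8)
The plan is to use the duplication formula on $E_N$ to relate $x(P)$ and $x(2P)$, and then to exploit the length-$1$ hypothesis on $2P$ together with the ABC conjecture to force both heights to be $O(\log|N|)$. First I would write $P=(A/B^2, C/B^3)$ in the notation of (\ref{shape}), so that $x(2P)$ is given by the standard duplication formula
\[
x(2P)=\frac{(x(P)^2+N)^2}{4x(P)(x(P)^2-N)}=\frac{(A^2+NB^4)^2}{4AB^2(A^2-NB^4)}.
\]
Since $\gcd(B,AC)=1$ and one checks the numerator and denominator share only controlled common factors (powers of $2$, and factors of $N$), the denominator of $x(2P)$ in lowest terms is, up to bounded and $N$-supported factors, the radical-type quantity $AB^2(A^2-NB^4)$. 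The hypothesis $L(2P)\le 1$ says this denominator is (up to those bounded/$N$-supported factors) a prime power. So $A$, $B$, and $A^2-NB^4$ are each, up to factors supported on a bounded set of primes together with the primes dividing $N$ and $2$, essentially prime powers — and in fact must all be powers of the \emph{same} prime (or units), since a genuine product of coprime non-unit pieces would contribute at least two primes.

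The key step is then an ABC argument applied to the identity
\[
A^2 - NB^4 = C_Q'\quad\text{i.e.}\quad A^2 = NB^4 + (A^2-NB^4),
\]
or more precisely to a suitable coprime version of it. Writing $u=A^2$, $v=NB^4$, $w=A^2-NB^4$ with $u=v+w$, the radical $\operatorname{rad}(uvw)$ is bounded by a constant power of $N$ times $\operatorname{rad}(A)\operatorname{rad}(B)\operatorname{rad}(A^2-NB^4)$; by the previous paragraph each of those radicals is $O(\log\text{-size})$ in the sense that $A$, $B$, $A^2-NB^4$ contribute only a single prime each (the same prime), so $\operatorname{rad}(uvw)\ll_N$ a bounded power of that prime times a power of $N$. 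Comparing with $\max(|u|,|v|,|w|)$, which is of size $\max(|A|^2,|N||B|^4)$, the ABC inequality forces $\log\max(|A|,|B|^2)\ll \log N$, which is exactly $\log|x(P)|\ll\log|N|$; feeding this back into the duplication formula gives $\log|x(2P)|\ll\log|N|$ as well. The torsion points of $E_N$ are known explicitly (they inject into a group of order dividing a small bound, with $x$-coordinates among $0,\pm\sqrt N$ when $N$ is a perfect square), so excluding $P$ torsion only removes finitely many exceptional configurations and does not affect the uniform bound.

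The main obstacle I anticipate is bookkeeping the common factors: the numerator $(A^2+NB^4)^2$ and the denominator $4AB^2(A^2-NB^4)$ are not automatically coprime, and cancellation at primes dividing $2N$ (and at primes where $E_N$ has bad reduction) must be tracked carefully so that "$L(2P)\le 1$" really does translate into the arithmetic statement that $A$, $B$ and $A^2-NB^4$ are prime powers up to an explicitly $N$-bounded factor. Getting the coprime triple for ABC exactly right — in particular ensuring the prime appearing in $A$, in $B$, and in $A^2-NB^4$ is forced to be a single common prime rather than up to three distinct ones — is where the hypothesis $L\le 1$ (rather than $L\le 2$ or $3$) is essential, and this is the step that must be executed with care.
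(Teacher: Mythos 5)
Your route through the duplication formula written as $x(2P)=\frac{(A^2+NB^4)^2}{4AB^2(A^2-NB^4)}$ is viable, but it is genuinely different from the paper's, and as sketched it has a gap in the ABC step. The paper first reduces to $L(P)\le 1$ (since $B_P\mid B_{2P}$), writes $x(2P)=\bigl(\frac{A^2+NB^4}{2CB}\bigr)^2$, and in the key case $L(P)=1$ avoids ABC entirely: the cancellation condition forces $C\mid A^2+NB^4$, which together with the curve equation $C^2=A(A^2-NB^4)$ shows every prime power dividing $C$ divides $2N$; hence $|C|\le 2N$, then $|A|\le 4N^2$, and $B$ is bounded — all elementary. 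ABC is invoked only in the integral case $B=1$, as a Hall-type bound. Your version instead deduces from $L(2P)\le 1$ that $A$, $B$ and $A^2-NB^4$ are each a power of one common prime $p$ times a divisor of a fixed power of $2N$ (this part is correct: $4AB^2(A^2-NB^4)=(2BC)^2$ and the gcd with the numerator divides a bounded power of $2N$), and then applies ABC to $A^2=NB^4+(A^2-NB^4)$.

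The gap is that your ABC application does not close uniformly: the radical bound $\operatorname{rad}\ll Np$ only helps when $p$ is small relative to $\max(|A|^2,N|B|^4)$. If $p\mid B$ then $p\le |B|$ and ABC gives $|B|^{3-\epsilon}\ll N^{O(1)}$ and then $|A|^2\ll N^{O(1)}$; if $p\mid A$ then $p\le |A|$, ABC gives $|A|^{1-\epsilon}\ll N^{O(1)}$, and $B$ is bounded directly because $B^2$ then divides the $2N$-supported cofactor. But if the single permitted prime divides only $A^2-NB^4$, then $p$ can be as large as $\max(|A|^2,N|B|^4)$ itself and the ABC inequality is vacuous; your sentence ``the ABC inequality forces $\log\max(|A|,|B|^2)\ll\log N$'' is not justified in that branch. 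The branch is not fatal — there both $A$ and $B^2$ divide a fixed power of $2N$, so they are bounded with no ABC at all — but you must make the trichotomy (which of $A$, $B$, $A^2-NB^4$ carries $p$) explicit. Once you do, each branch closes and your argument is complete; it is, however, considerably heavier than the paper's, which settles the non-integral case by pure divisibility bookkeeping.
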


\begin{proof} Note that $L(2P)\le 1$ implies $L(P)\le 1$. If
$$P=\left(\frac{A}{B^2},\frac{C}{B^3}\right)
$$
with $\gcd(B,AC)=1$ then
\begin{equation}\label{dup}x(2P)=\left(\frac{A^2+NB^4}{2CB}\right)^2.
\end{equation}
If $L(P)=0$ then $\log |x(P)| =\log |A| << \log N$ follows from the
ABC Conjecture. A similar bound for $\log |x(2P)|$ follows from (\ref{dup})
with~$B=1$.

If $L(P)=1$ then $2C$ must cancel in (\ref{dup}). That is
\begin{equation}\label{plus}
C|A^2+NB^4
\end{equation}
using the coprimality relations $\gcd (B,C) = \gcd (B,A^{2}+NB^{4})=1$. %
The defining equation gives
\begin{equation}\label{minus}
C^2=A(A^2-NB^4).
\end{equation}
Any prime power $p^r$ dividing $C$ divides $2N$ from (\ref{plus}) and
(\ref{minus}). Hence $|C|~\le~2N$. Then equation (\ref{minus}) implies
$|A|\le 4N^2$. Rearranging (\ref{minus}) bounds $B$ in a similar way.
The bound for $x(P)$ follows directly. The bound for $x(2P)$ follows using (\ref{dup}).
\end{proof}

Write $E^O(\R)$ for the connected
component of infinity on the real curve. If $E(\R)$ has two connected
components, write
$E^B(\R)$ for the bounded component.

\begin{proof}[Proof of Theorem \ref{cnt}]
Note firstly that
\begin{equation}\label{BC}
|x(P)| \le N,
\end{equation}
for any $P\in E^B_N(\mathbb Q)$. A proof of the first part of Theorem \ref{cnt} follows: if $n$ is odd then $nQ_1\in E^B_N(\Q)$
so we are done, and
if $n$ is even and $L(nQ_1)\le 1$ then Lemma \ref{cute} applies.

For the second part, assume firstly that $Q_2$ is twice a rational point.
Any $Q\in G$ can be written $Q=n_1 Q_1 +n_2Q_2$ with $n_1,n_2 \in \mathbb Z$.
If $n_2=0$ the first part applies. If $n_1=0$ Lemma~\ref{cute} applies.
If $n_1$ is odd then $Q\in E^B_N(\mathbb Q)$ so (\ref{BC}) applies. If
$n_1$ is even then Lemma~\ref{cute} applies.

Now assume that $x(Q_2)$ is a square. This condition implies
\cite[Chapter 14]{cassels}
that $E_N'$ maps to $E_N$ via
a 2-isogeny~$\sigma$, where
$$E_N': \quad y^2 = x^3+4Nx \mbox{ and } x(\sigma(Q))=x(Q)+\frac{4N}{x(Q)}.
$$
An analogue of Lemma \ref{cute} says that if $L(\sigma(Q))\le 1$ then
\begin{equation}\label{cuter}
\log |x(Q)| << \log |N| \mbox{ and } \log |x(\sigma(Q))| << \log |N|.
\end{equation}
To prove (\ref{cuter}) firstly write $Q=[a/b^2,c/b^3]$ with $a,b,c\in \mathbb Z$
and~$b$ coprime to~$ac$. The case when $b=1$ follows from the ABC
conjecture as before. If $b$ is a prime power, then $L(\sigma(Q))\le 1$
only when $a|4N$. Now using the ABC conjecture on the equation
$$c^2=a^3+4Nab^4
$$
we obtain $\log |b|<<\log N$. The double of any rational point lies in the
image of~$\sigma$: if $Q=2Q'$ then $Q$ is the image of $\widehat{\sigma}(Q')$, where
$\widehat{\sigma}:E_N\rightarrow E_N'$ is the dual isogeny. Therefore, the assumptions on $Q_1$ and $Q_2$
guarantee that
the elements of~$G$ either lie on the bounded component or in the
image of~$\sigma$.
The proof follows exactly as before.
\end{proof}

\begin{proof}[Proof of Corollary \ref{wow}]
Translating by the point~$D~=~[0,0]$,
the conditions and the conclusion of Theorem~\ref{cnt}
become the corresponding statements for the corollary. Note
in particular that translation by~$D$ essentially inverts the
$x$-coordinate, hence numerators become denominators. Also, the distance between a
point and infinity changes places with the distance to $D$.
\end{proof}

This section concludes with a generalization of (\ref{BC}), bounding
the $x$-coordinate
of a point in the bounded component of the real curve in
short Weierstrass form. Let $h(a/b)=\log \max \{|a|,|b|\}$ denote
the usual projective height. Let $j=j_E$ denote the $j$-invariant of $E$,
$\Delta=\Delta_E$ the discriminant of $E$ and
$h(E):=\frac{1}{12}\max (h(j)),h(\Delta ))$ the height of $E$.

\begin{proposition}\label{weakerthan-canonical-height}
Assume $E$ is in short Weierstrass form. For every rational point  $Q\in E^{B}(\Q )$
the following inequality holds:
\begin{equation}
\log |x(Q)|\le 4h(E).
\end{equation}
\end{proposition}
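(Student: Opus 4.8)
The strategy is to exploit the fact that a point $Q=(x_0,y_0)\in E^B(\mathbb{Q})$ on the bounded component lies in a compact interval of $x$-values determined by the real roots of the cubic. Write $E$ in short Weierstrass form $y^2=x^3+Ax+B=f(x)$ with $A,B\in\mathbb{Z}$. The bounded component exists precisely when $f$ has three real roots $e_1<e_2<e_3$, and then $E^B(\mathbb{R})$ projects onto $x\in[e_1,e_2]$. So the first step is the purely real estimate $|x_0|\le\max(|e_1|,|e_2|)\le\max_i|e_i|$. The second step is to bound $\max_i|e_i|$ in terms of the coefficients $A,B$: since the $e_i$ are roots of $X^3+AX+B$, elementary symmetric function estimates (or a direct bound on roots of a monic polynomial) give $\max_i|e_i|\le 1+\max(|A|,|B|)$, hence $\log|x_0|\le\log(1+\max(|A|,|B|))$ up to a harmless additive constant.

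The third step is to convert $\log\max(|A|,|B|)$ into the invariant quantity $h(E)=\tfrac1{12}\max(h(j),h(\Delta))$. For a short Weierstrass equation one has $\Delta_E=-16(4A^3+27B^2)$ and $j_E=-1728\,(4A)^3/\Delta_E$; conversely $A$ and $B$ are recovered from $j$ and $\Delta$ by $A=\tfrac{c_4}{48}$, $B=\tfrac{c_6}{864}$ with $c_4,c_6$ expressed through $j$ and $\Delta$ via $c_4^3=j\Delta$ and $c_6^2=(j-1728)\Delta$ (up to sign). Thus $h(A)$ and $h(B)$ are each bounded by an explicit small multiple — roughly $\tfrac13 h(j)+\tfrac12 h(\Delta)$ plus $O(1)$ — of $\max(h(j),h(\Delta))$. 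Tracking the constants carefully through these relations, and absorbing the additive logarithmic constants coming from the numerical factors like $16,1728,48,864$, should yield the clean bound $\log|x(Q)|\le 4h(E)$; the numerical coefficient $4$ is presumably exactly what falls out after this bookkeeping, with the definition's factor $\tfrac1{12}$ chosen to make it come out round.

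**Main obstacle.** The only real subtlety is the constant-chasing in the third step: one must ensure that all the additive $O(1)$ terms (logarithms of $16$, $27$, $1728$, $48$, $864$, and the $+1$ in the root bound) genuinely fit inside the slack between the naive estimate and the stated coefficient $4$, \emph{uniformly} over all curves, including degenerate-looking cases where $j$ is very large or $\Delta$ is small. A clean way to handle this is to split into the two cases $h(j)\le h(\Delta)$ and $h(j)>h(\Delta)$ and in each case bound $h(A),h(B)$ directly from whichever of $c_4,c_6,\Delta$ is controlled, rather than inverting the formulas symmetrically; this avoids spurious blow-up when one invariant dominates. Beyond this, everything is elementary — no arithmetic geometry is needed, only the real-analysis observation about the bounded component together with polynomial root bounds.
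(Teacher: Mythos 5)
Your step 1 (a point of $E^{B}(\R)$ has $|x|$ bounded by the largest root of the cubic) and your overall plan (bound the roots via $A,B$, then convert to $h(E)$ using $c_4^3=j\Delta$ and $c_6^2=(j-1728)\Delta$) are the right skeleton, but there is a genuine gap at step 2: the unweighted root bound $\max_i|e_i|\le 1+\max(|A|,|B|)$ is too lossy to give the constant $4$. The identities only give $\log|A|\le\frac{1}{3}(h(j)+h(\Delta))+O(1)\le 8h(E)+O(1)$ and $\log|B|\le\frac{1}{2}(h(j)+h(\Delta))+O(1)\le 12h(E)+O(1)$, so your chain terminates at $\log|x(Q)|\le 12h(E)+O(1)$; even your own accounting, $\frac{1}{3}h(j)+\frac{1}{2}h(\Delta)\le 10h(E)$, overshoots the target, and no bookkeeping of the numerical factors $16,48,864,1728$ can rescue a wrong leading coefficient. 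Concretely, take $A=0$ and $|B|$ large: then $j=0$, $\Delta=-432B^2$, so $4h(E)=\frac{2}{3}\log|B|+\frac{1}{3}\log 432$, while $\log(1+\max(|A|,|B|))\approx\log|B|$, which exceeds $4h(E)$ once $|B|$ is moderately large. The proposition survives because the actual root is $-B^{1/3}$, of logarithm $\frac{1}{3}\log|B|$: the roots of $x^3+Ax+B$ scale like $\max(|A|^{1/2},|B|^{1/3})$, and it is exactly this weighting that produces the coefficient $4=\max\bigl(\frac{1}{2}\cdot 8,\ \frac{1}{3}\cdot 12\bigr)$.

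The repair is to replace your root bound by a weighted one, e.g.\ Fujiwara's $|e_i|\le 2\max\bigl(|A|^{1/2},|B/2|^{1/3}\bigr)$; then $\frac{1}{2}\log|A|\le\frac{1}{6}(h(j)+h(\Delta))-\frac{1}{2}\log 48\le 4h(E)-\frac{1}{2}\log 48$ and $\frac{1}{3}\log|B|\le\frac{1}{6}(h(j)+h(\Delta))+\frac{1}{6}\log 1729-\frac{1}{3}\log 864\le 4h(E)-1.01$, and the additive constants (including Fujiwara's $\log 2$) are comfortably absorbed by the $-\log 48$ and $-\log 864$ coming from $c_4=-48A$, $c_6=-864B$ --- so your case-splitting instinct does close the argument once the exponents $\frac{1}{2}$ and $\frac{1}{3}$ are in place. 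The paper obtains the same weighting by a different device, namely Cardano's formula: each root is written as $u_i+v_i$ with $(2u_i^{3}+B)^{2}=-\Delta/432$, whence $2|u_i|^{3}\le|B|+\sqrt{|\Delta|/432}$, and $|B|$ is then controlled through $432B^{2}=|(j-1728)\Delta|/1728$; the cube roots in Cardano's formula supply the exponent $\frac{1}{3}$ automatically, which is exactly the ingredient your proposal is missing.
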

\begin{proof}
Denote by $\alpha_{1}, \alpha_{2},\alpha_{3}$ the three roots
of $x^{3}+Ax+B.$ Using Cardan's Formula there are two complex
numbers $u_{i},v_{i}$ such that
$\alpha_{i}=u_{i}+v_{i}$ and
$$\Delta =-16\times 27\times (B + 2u_{i}^{3})^{2}=-16\times 27\times (B + 2v_{i}^{3})^{2}.$$
Since $-16\times 27\times B^{2} = \frac{(j+1728 )\Delta}{1728}$ we have
$$\begin{array}{rcl}
2|u_{i}|^{3}\le |B| +|B+2u_{i}^{3}|&\le &e^{6h(E)}\left(\frac{1}{2^{4}\times3^{3}}+\frac{e^{12h(E)}}{2^{10}\times 3^{6}}\right)^{1/2}+\frac{e^{6h(E)}}{12\sqrt{3}}\\
&\le &\frac{e^{6h(E)}}{12\sqrt{3}} + \frac{e^{12h(E)}}{864} +
\frac{e^{6h(E)}}{12\sqrt{3}}\\
&\le & \frac{e^{12h(E)}}{4\sqrt{3}}.\\
\end{array}$$

In the same way, we prove that $|v_{i}|\le
\frac{e^{4h(E)}}{2\times 3^{1/6}}$. In
particular an upper bound for $|\alpha_{i}|$ follows: $|\alpha_{i}|\le
\frac{e^{4h(E)}}{3^{1/6}}.$
To conclude notice that
$|x(Q)|\le\displaystyle\max_{i=1}^{3}(|\alpha_{i}|)$ for every point
$Q$ in the bounded real connected component of $E$.
\end{proof}

\section{Computational Data}\label{data}

\subsection{Data concerning Hall's conjecture} To enable a comparison to be made,
a table is included here of some examples
in the length 0 case. They are drawn from Elkies' research into
Hall's conjecture \cite{elkiescompsci,elkieshall}. The table shows values of
$x$ and $d$ with $E: y^2=x^3+d$ with $\log x$ large in comparison
with $2\log |d|$ (essentially $\log |\Delta_E|$).
\begin{center}
\begin{tabular}{|l|l|l|l|}
\hline &&&\\[-10pt]
$d$ & $x$ & $\log x$& $\log x/2\log |d|$\\
\hline
1641843& 5853886516781223& 36.305
& 1.268
\\
\hline
30032270& 38115991067861271&38.179
&1.108
\\
\hline
-1090&28187351&17.154
&1.226
\\
\hline
-193234265&810574762403977064& 41.236
&1.080
\\
\hline
-17&5234&8.562
&1.511
\\
\hline
-225&720114&13.487
&1.245
\\
\hline
-24&8158&9.006
&1.417
\\
\hline
307&939787&13.753
&1.200
\\
\hline
207&367806&12.815
&1.201
\\
\hline
-28024&3790689201&22.055
&1.076
\\
\hline
\end{tabular}
\end{center}

\subsection{Some rank-2 curves}
The table that
follows shows data collected for some rank 2 curves
taken from a table of 30 curves studied by Peter
Rogers \cite{everogwar,pr} (the first 10
curves and the last 3).
In rank 2 the
available data support
the heuristic argument that, if $P_1,P_2$ are a basis for the torsion-free
part of $E(\QQ)$, then the number of length 1 points $n_1P_1+n_2P_2$
having $|n_1|,|n_2|<T$ is asymptotically
$c_1\log T,$
where $c_1>0$ is a constant which depends only upon $E$.

In the table, $E$ is a minimal elliptic curve given by
a vector $[a_1,\dots ,a_6]$ in Tate's notation; $P$ and $Q$ denote
independent points in $E(\Q)$; $|\Delta_E|$ denotes the absolute
value of the discriminant of $E$; $[m,n]$ denote the indices
yielding the maximum absolute
value of an $x$-coordinate with
a prime square denominator, where $|m|,|n|\le 150$;
$\overline h$ denotes that absolute value; the final column
compares $\overline h$ with $h_E=\log |\Delta_E|$.
\begin{center}
\begin{tabular}{|l|l|l|l|l|l|l|}
\hline &&&&&&\\[-10pt]
$E$ & \textrm{$P$} & \textrm{$Q$} & $|\Delta_E|$ & $[m,n]$ & $\overline h$ &$\overline h/h_E$ \\[3pt]
\hline
[0,0,1,-199,1092]&
[-13,38]&
[-6,45]&
11022011&
[21, 26] & 12.809 & 0.789 \\
\hline
[0,0,1,-27,56]&[-3,10]&[0,7]&107163&[14, 5]&11.205& 0.967\\
\hline
[0,0,0,-28,52]&
[-4,10]&
[-2,10]&
236800&
[14, 8]&
13.429&
1.085\\
\hline
[1, -1, 0, -10, 16]&
[-2,6]&
[0,4]&
10700&
[29, 11]&
9.701&
1.045\\
\hline
[1,-1,1,-42,105]&
[17,-73]&
[-5,15]&
750592&
[33, 30]&
8.136&
 0.601\\
\hline
[0, -1, 0, -25, 61]&
[19,-78]&
[-3,10]&
154368&
[29,69]&
16.592&
1.388\\
\hline
[1, -1, 1, -27, 75]&
[11,-38]&
[-1,10]&
816128&
[22, 17]&
 12.363& 0.908\\
\hline
[0, 0, 0, -7, 10]&
[2,2]&
[1,2]&
21248 &
[18, 43]& 12.075& 1.211\\
\hline
[1, -1, 0, -4, 4]&
[0,2]&
[1,0]&
892&
[5, 17]& 11.738& 1.727\\
\hline
[0, 0, 1, -13, 18]&
[1,2]&
[3,2]&
3275&
[4, -3]& 6.511& 0.804\\
\hline
[0, 1, 0, -5, 4]&
[-1,3]&
[0,2]&
4528&
[1, -4]& 7.377& 0.876\\
\hline
[0, 1, 1, -2, 0]&
 [1,0]&
 [0,0]&
 389&
 [5, 8]& 9.707& 1.627\\
 \hline
 [1, 0, 1, -12, 14]&
 [12,-47]&
 [-1,5]&
 2068&
 [16, 19]& 9.819& 1.286\\
\hline
\end{tabular}
\end{center}

In the following table, similar computations are shown, except that
the numerator of $x(mP+nQ)$ is tested for primality and a resulting
bound for the $x$-coordinate is shown. For the curves marked * it seems
likely that only finitely many points have a prime numerator in the $x$-coordinate.

\begin{center}
\begin{tabular}{|l|l|l|l|l|l|l|}
\hline &&&&&&\\[-10pt]
$E$ & \textrm{$P$} & \textrm{$Q$} & $|\Delta_E|$ & $[m,n]$ & $\overline h$ &$\overline h/h_E$ \\[3pt]
\hline
[0,0,1,-199,1092]&
[-13,38]&
[-6,45]&
11022011&
[65,48] & 7.476 & 0.461  \\
\hline
*[0,0,1,-27,56]&[-3,10]&[0,7]&107163&[4,1]&1.945& 0.168\\
\hline
[0,0,0,-28,52]&
[-4,10]&
[-2,10]&
236800&
[14,8]&13.429
&1.085
\\
\hline
*[1, -1, 0, -10, 16]&
[-2,6]&
[0,4]&
10700&
[1,-1]&3.135
&0.337
\\
\hline
[1,-1,1,-42,105]&
[17,-73]&
[-5,15]&
750592&
[21,12]&8.923
&0.659
 \\
\hline
[0, -1, 0, -25, 61]&
[19,-78]&
[-3,10]&
154368&
[9,13]&5.976
&0.500
\\
\hline
[1, -1, 1, -27, 75]&
[11,-38]&
[-1,10]&
816128&
[8,5]&9.843
 & 0.723\\
\hline
[1, -1, 0, -4, 4]&
[0,2]&
[1,0]&
892&
[3,3]& 2.772& 0.408\\
\hline
[0, 0, 1, -13, 18]&
[1,2]&
[3,2]&
3275&
[68,8]& 15.496& 4.408\\
\hline
\end{tabular}
\end{center}

\subsection{Some rank-3 curves.}

In rank 3, it
is expected that asymptotically $c_2T$ values $x(n_1P_1+n_2P_2+n_3P_3)$ with
index bounded by~$T$, will have length-1,
where
$c_2>0$ depends only upon~$E$. As before, elliptic curves $E$ are listed, now with generators $P$, $Q$
and $R$. The index set is bounded by 100 in each variable. For curves 8 and 9 in the table,
although the largest values occur at large indices, the increment is note-worthy. For curve 8, $[-30,47,22]$
yields a point whose $x$-coordinate has logarithm 19.244. For curve 9,
$[10,1,-1]$ yields a point whose $x$-coordinate has logarithm 20.586.

\begin{center}
\begin{tabular}{|l|l|l|l|l|l|l|l|}
\hline &&&&&\\[-10pt]
$E$ & \textrm{$P$} & \textrm{$Q$} & \textrm{$R$} & $[m,n,l]$ & $ \overline h$ &$\overline h/h_E$ \\[3pt]
\hline
[0,0,1,-7,6] & [-2,3] & [-1,3] & [0,2] & [ 27, 32, -23 ] & 14.079 & 1.650 \\
\hline
[1,-1,1,-6,0] & [-2,1] & [-1,2] & [0,0] & [ -45, 36, 41 ] & 15.934 & 1.709 \\
\hline
[1,-1,0,-16,28] & [-3,8] & [-2,8] & [-1,7] & [ 12, 35, 29 ] & 21.260 & 2.114 \\
\hline
[0,-1,1,-10,12] & [-3,2] & [-2,4] & [-1,4] &  [ 1, 32, 3 ] & 13.960 & 1.328 \\
\hline
[1,0,1,-23,42] & [-5,8] & [-1,8] & [0,6] & [ 10, 7, 4 ] & 18.721 & 1.613 \\
\hline
[0, 1, 1, -30, 60] & [4, 4] & [-5, 10] & [-4, 11] & [18, 27, 40] & 14.463 & 1.133 \\
\hline
[0, 0, 1, -147, 706] & [4, 13] & [-13, 20] & [-11, 31] & [-39,20,30] & 15.800 & 0.968 \\
\hline
[0, 0, 0, -28, 148] & [4, 10] & [-6, 10] & [-4, 14] & [-77, 69, 55] & 19.720 & 1.240 \\
\hline
[1, -1, 0, -324, -896] & [23, 47] & [-15, 28] & [-13, 38] & [93, 27, 17] & 22.899 & 1.075 \\
\hline
[1, -1, 0, -142, 616] & [-12, 28] & [-11, 33] & [-10, 36] & [21, 23, 20] & 18.494 & 1.058 \\
\hline
\end{tabular}
\end{center}

\subsection{Some Elliptic Divisibility Sequences}

\begin{center}
\begin{tabular}{|l|l|l|l|l|}
\hline &&&&\\[-10pt]
$E$ & \textrm{$P$}  & $|\Delta_E|$ & $n$ &$\overline h/h_E$\\[3pt]
\hline [1,1,1,-125615,61203197]& [7107,594946] &
1494113863691104200&39&0.361\\
\hline [1,0,0,-141875,18393057]&
 [-386,-3767]&
 36431493120000000&32& 0.216\\
\hline [1,-1,1,-3057,133281]&
 [591,-14596]&
 5758438400000&33& 0.388 \\
\hline [1, 1, 1, -2990, 71147]& [27,-119]&
  553190400000&43& 0.319 \\
  \hline
  [0, 0, 0, -412, 3316]&
 [-18,-70]&
  274400000&37&0.484 \\
  \hline
  [1, 0, 0, -4923717, 4228856001]&
[1656,-25671]&
  87651984035481255936&197& 0.331 \\
  \hline
  [1, 0, 0, -13465, 839225]&
 [80,485]&
 148827974400000&34& 0.254 \\
 \hline
 [1, 0, 0, -21736, 875072]&[-154,-682]&325058782980096&
 36& 0.245 \\
 \hline
 [1, -1, 1, -1517, 26709]&
[167,-2184]&
  76204800000&41& 0.223 \\
  \hline
  [1, 0, 0, -8755, 350177]&
[14,473]&
 10245657600000&79& 0.255 \\
 \hline
 [1, -1, 1, -180, 1047]&[-1,35]&62720000&31& 0.451\\
\hline [1,0,0,-59852395,185731807025]&[12680,1204265]&
  1180977565620646379520000&28& 0.277\\
\hline [1,0,0,-10280,409152]&
 [304,-5192]&
 3093914880000&
 41&
0.283 \\
\hline [0,1,1,-310,3364]&
 [-19,52]&
 3281866875&
 59&
0.309 \\
\hline [1,0,0,-42145813,105399339617]&
 [31442,5449079]&
 8228050444183680000000&
 47& 0.206 \\
\hline [1,0,0,-25757,320049]&
 [-116,-1265]&
 1048775180673024&
 40&
0.269 \\
\hline [1,0,0,-350636,80632464]&
 [352,748]&
 51738305261094144&
 34& 0.287 \\
\hline [1,0,0,-23611588,39078347792]&[-3718,-272866]&
 182691077679728640000000&
   26&
0.264 \\
 \hline
\end{tabular}
\end{center}

The table shows data collected for some elliptic
divisibility sequences generated by rational points with small
height \cite{nde,eims}.
Although the curves themselves do not necessarily have rank 1,
the data is interesting because some of the discriminants are very large,
also the primes occurring are extreme in a sense.
The notation remains as before, but this time, $n$ denotes the
index yielding the maximum  absolute value of an $x$-coordinate with a prime square denominator,
where $n\le 3500$.

\subsection{Other Repelling Points}\label{distribution}
What follows are some examples of rank-2 curves with generators $P$
and $Q$
and a rational 2-torsion point equal to $D=[0,0]$.
We computed the smallest value of $x(mP+nQ)$ when $L(mP+nQ)=1$,
assuming the bound on $|m|$ and $|n|$ was 100. For consistency with the
definitions given, the largest
value
$$\overline h_{D}=
-\log |x(mP+nQ)-x(D)|=-\log |x(mP+nQ)|
$$
with $L(mP+nQ)=1$ and $|m|,|n|\le 100$ is recorded.
\begin{center}
\begin{tabular}{|l|l|l|l|l|l|l|}
\hline &&&&&&\\[-10pt]
$E$ & \textrm{$P$} & \textrm{$Q$} & $|\Delta_E|$ & $[m,n]$ & $\overline h_{D} $ &$\overline h_{D}/h_E$ \\[3pt]
\hline
[0, 0, 0, 150, 0]& [10, 50]& [24, 132]&216000000& [4, -19]& 6.436 & 0.335\\
\hline
[0, 0, 0, -90, 0]& [-9, 9]& [-6, 18]& 46656000& [1, 30]& 3.756 & 0.212\\
\hline
[0,0,0,-132,0]&[-11,11]&[-6,24]&147197952&[1,2]&4.470&0.237\\
\hline
[0,1,0,-648,0]&[-24,48]&[-9,72]& 17420977152&[1,-6]&0.602&0.025\\
\hline
[0,0,0,34,0]&[8,28]&[32,184]&2515456&[12,-19]&2.107&0.143\\
\hline
[0,0,0,-136,0]&[-8,24]&[153,1887]& 160989184&[17,2]&0.279&0.014\\
\hline
[0,1,0,-289,0]&[-17,17]&[-16,28]& 1546140752&[11,0]&5.712&0.269\\
\hline
\end{tabular}
\end{center}

\end{document}